\documentclass[11pt, reqno]{amsart}

   \setlength{\textwidth}{145mm}

\usepackage{amsmath}
\usepackage{color}

\definecolor{blu}{rgb}{0,0,0.1}

\makeatletter
\def\itemize{
  \ifnum\@itemdepth>3\@toodeep\else
    \advance\@itemdepth\@ne
    \edef\@itemitem{labelitem\romannumeral\the\@itemdepth}%
        \list{\csname\@itemitem\endcsname}%
      {\leftmargin=20pt\def\makelabel##1{\hss\llap{##1}}}
        \fi}

   \makeatletter
   \def\LaTeX{\leavevmode L\raise.42ex
       \hbox{\kern-.3em\size{\sf@size}{0pt}\selectfont A}\kern-.15em\TeX}
   \makeatother
   
   \newcommand{\BibTeX}{{\rm B\kern-.05em{\sc
             i\kern-.025emb}\kern-.08em\TeX}}
\def\bbm[#1]{\mbox{\boldmath $#1$}}

   \makeatletter

   \makeatother

   \newcommand{\e }{\varepsilon }

   \renewcommand{\O }{\Omega }

   \newcommand{\intr }{\int_{\R^3}}
   \newcommand{\into }{\int_{\Omega}}

   \newcommand{\epi}{{\e,P}}

   \newcommand{\dd}{{\rm d}_{\partial\Omega}}
   
   \newcommand{\R}{{\mathbb{R}}}

   \newcommand{\N}{\mathbb{N}}

     \newcommand{\p}{{\bf P}}

\newcommand{\cal}{\mathcal }

   \newcommand{\beq}{\begin{equation}}
   \newcommand{\eeq}{\end{equation}}

   \newtheorem{theorem}{Theorem}[section]
   
   \newtheorem{proposition}[theorem]{Proposition}
   \newtheorem{lemma}[theorem]{Lemma}
   
   \newtheorem{remark}[theorem]{Remark}
   \newcommand{\bremark}{\begin{remark} \em}
   \newcommand{\eremark}{\end{remark} }

\DeclareMathOperator{\di}{d}
\def\bbm[#1]{\mbox{\boldmath $#1$}}
 \oddsidemargin=0.2in

 \evensidemargin=0.2in
\baselineskip=16pt \addtolength{\textwidth}{1cm}

\begin{document}

\title[multiple alternate sign peaks]{
Solutions with  multiple alternate sign peaks along a boundary geodesic to a 
semilinear Dirichlet problem}
\thanks{The    authors  are supported by
  Mi.U.R.   project ``Metodi variazionali e topologici nello studio
  di fenomeni non lineari''.}

\author{Teresa D'Aprile \& Angela Pistoia}
\address{Dipartimento di Matematica, Universit\`a di Roma ``Tor
Vergata", via della Ricerca Scientifica 1, 00133 Roma, Italy.}
\email{daprile@mat.uniroma2.it }
\address{Angela Pistoia, Dipartimento SBAI, Universit\`a di Roma
``La Sapienza", via Antonio Scarpa 165, 00161 Roma, Italy.}
\email{pistoia@dmmm.uniroma1.it }

\begin{abstract}We study the existence  of  sign-changing multiple interior spike
solutions for the following Dirichlet problem \begin{equation*}\e^2\Delta v-v+f(v)=0\hbox{ in }\Omega,\quad v=0
\hbox{ on }\partial \Omega,\end{equation*} where $\Omega $ is a smooth and bounded domain of
$\R^N$, $\e$ is a small positive parameter, $f$ is a superlinear, subcritical and odd nonlinearity. 
In particular we prove  that if $\Omega$ has a plane of symmetry and its intersection with the plane is a two-dimensional strictly convex domain, then, provided that $k$ is even and sufficiently large, a $k$-peak solution exists with alternate sign    peaks aligned along a closed curve near a geodesic of $\partial \Omega$.
\end{abstract}
\maketitle
\section{Introduction}
The present paper is concerned with  the following singularly perturbed elliptic problem:
\begin{equation}\label{eq1}\left\{\begin{aligned}&\e^2\Delta v-v+|v|^{p-2}v=0&\hbox{ in }&\Omega,\\ &v=0
&\hbox{ on }&\partial \Omega,\end{aligned}\right.\end{equation} where $\Omega $ is a smooth and bounded domain of
$\R^N$, $N\geq 2$, $2<p<\frac{2N}{N-2}$ if $N\geq 3$ and $p>2$ if $N=2$, and $\e>0$ is a small parameter.

This problem arises from different mathematical models: for instance,
it appears in the study of stationary solutions for the Keller-Segal system in chemotaxis and the Gierer-Meinhardt system in biological pattern formation.

In the pioneering paper   \cite{nw} Ni and Wei proved that for $\e>0$  sufficiently small problem
\eqref{eq1} has a positive   least energy solution $v_\e$  which develops a spike layer at the {\em most
centered} part of the domain, i.e. $ {\di}_{\partial\Omega}(P_\e) \to \max_{P \in \Omega} {\di}_{\partial\Omega}(P)$,
where $P_\e$ is the unique maximum of $v_\e$.  Hereafter ${\di}_{\partial \Omega}(P)$ denotes the distance of $P$
from $\partial\Omega$.
Since then, there have been many works looking for  positive  solutions with single and multiple
 peaks and investigating  the location of the asymptotic  spikes as well as 
their profile  as $\e\to 0^+$.
More specifically, several papers study the effect of the geometry of the domain on the
existence of positive $k-$peak solutions  (see \cite{bc,cdny,d1,d2,dy0,dy,dw,dfw1,dfw2,gp,ln,w1} and references therein).
In particular,  Dancer and Yan (\cite{dy0}) proved that if the domain has a nontrivial topology, then there always exists a  $k$-peak positive solution for any $k\geq 1$. This result has been generalized by 
Dancer, Hillman and Pistoia (\cite{dhp})  to the case of a not contractible domain.   On the other hand, Dancer and Yan
 (\cite{dy})   showed that if $\Omega$ is a strictly convex domain  and $k\geq 2$, then  problem \eqref{eq1} does
not admit a $k$-peak positive solutions   (see also \cite{wei0} for the proof when $k=2$).

The first result concerning existence of sign changing solutions was obtained by Noussair and Wei  (\cite{nouwei1}). They
  proved that for $\e$ sufficiently small \eqref{eq1} has a least energy nodal solution
  with one positive   and one negative peak centered at points $P_1^\varepsilon$, $P^\e_2$ whose location depends on the geometry of the domain $\Omega$.
  More precisely, if $\bar P_1$, $\bar P_2$ are the limits of a subsequence of $P_1^\e$, $P_2^\e$,  respectively,   then  $(\bar P_1, \bar P_2)$ maximizes the function 
$$\min \left\{ {\di}_{\partial\Omega}(P_1),\ {\di}_{\partial\Omega}(P_2),\
{|P_1-P_2|\over2}\right\},\quad P_1,\, P_2\in\Omega\times \Omega.$$  Moreover, Wei and Winter (\cite{ww}) showed that  such solution is odd  in one direction when $\Omega$ is the unit ball.
 Successively, Bartsch and Weth in \cite{bawe1,bawe2}, by using a different approach, found  a lower bound on the
number of sign-changing solutions. These papers are however not concerned with the shape of the solutions.

As far as we know the question of the existence of $k$-peaked nodal   solutions for problem
\eqref{eq1} for any  $k\geq 3$    is largely open. In a general domain, D'Aprile and Pistoia in \cite{dapi1} constructed solutions
with $h$ positive peaks and $k$ negative peaks as long as $h+k\le6$.  They also found
solutions with an arbitrarily large number of mixed positive and negative   peaks  provided some symmetric assumptions are satisfied: in the case 
of a domain  $\Omega$ symmetric with respect to a line, where the peaks are aligned with alternate   sign  along the axis of symmetry, and in the case of a ball, where the   peaks are located with alternate sign at the vertices of a  regular polygon with an even number of edges.

We believe that it should be possible to extend the above results to a more general domain.
More precisely, we conjecture that

{\em 
\begin{itemize}
\item[(C1)]   there exists a solution with alternate sign peaks aligned on an interior straightline intersecting with $\partial\Omega $ orthogonally;
\item[(C2)] there exists a solution with alternate sign peaks aligned  
on a curve close to a closed geodesic  of $\partial\Omega.$ 
\end{itemize}
}

 In the present paper,  we prove that the conjecture (C2) is true at least when $\Omega$ has a plane of symmetry and its intersection with the plane is a two dimensional strictly convex domain   (see the assumptions  (a1), (a2) below).

\medskip
In order to provide the exact formulation of the main result let us
fix some notation. We point out that most of the results contained
in the aforementioned papers can be extended to equations where
$|v|^{p-2}v$ is replaced by a more general nonlinear term. Then we
will consider the more general problem
\begin{equation}\label{sch}
\left\{\begin{aligned}&\e^2\Delta v- v+f(v)=0 &\hbox{ in }&\Omega,\\ &v=0 &\hbox{ on }&\partial
\Omega.\end{aligned}\right.\end{equation} We will assume that $f:\R\rightarrow\R$ is of class ${\mathcal C}^{1+\sigma}$ for some $\sigma>0$
and satisfies the following conditions:
\begin{itemize}
\item[(f1)] $f(0)=f'(0)=0$ and $f(t)=-f(-t)$ for any $t\in\R$;
\item[(f2)] $f(t)\to +\infty$, $f(t)=O\left(t^{p_1}\right)$, $f'(t)=O\left(t^{p_2-1}\right)$ as $t\rightarrow+\infty$ for some $p_1,p_2>1$ and
there exists $p_3>1$ such that
$$\forall s,t:\;\;\left|f'(t+s)-f'(t)\right|\le\left\{
\begin{aligned}
 &c|s|^{ p_3 -1} &\ \hbox{if}\ p_3>2\\
 &c\left(|s|+|s|^{ p_3 -1}\right) &\ \hbox{if}\ p_3\le2\\
  \end{aligned}
\right.$$ for a suitable $c>0$;
\item[(f3)] the following problem
\begin{equation*}
 \left\{
\begin{aligned}
 &\Delta w -w + f(w)=0, \;\;w>0\;\; \mbox{ in } \R^N \\
  &w(0)=\max_{z \in \R^N} w(z),\;\; \lim_{ |z| \to +\infty} w(z)
  =0
  \end{aligned}
\right.
\end{equation*} has a unique solution $w$ and $w$  is nondegenerate, namely the linearized operator
$$L:H^2(\R^N )\to L^2(\R^N ),\;\; L[u]:=\Delta u -u  + f' (w)u,$$
satisfies
\begin{equation*}
 \mbox{Kernel} (L) = \mbox{span} \left\{ \frac{\partial w}{\partial z_1},\ldots, \frac{\partial
w}{\partial z_{N }} \right\}.
\end{equation*}
\end{itemize}
By the well-known result of Gidas, Ni and Nirenberg (\cite{gnn}) $w$ is radially symmetric and strictly
decreasing in $r=|z|$. Moreover, by classical regularity arguments, the following asymptotic result holds
\begin{equation}\label{wdecay}
  \lim\limits_{|z|\rightarrow+\infty}|z|^{N-1\over2}e^{|z|}w(z)=A>0\;\ \hbox{ and }\;\
\lim\limits_{|z|\rightarrow+\infty} {w'(z)\over w(z) }=-1.
\end{equation}
 The class of nonlinearities $f$ satisfying (f1)-(f3) includes,
and it is not restricted to, the model $f(v)=|v|^{p-2}v $ with
$p>2$ if $N=1,2$ and $2<p<\frac{2N}{N-2}$ if $N\geq 3$.
 Other nonlinearities can be found in \cite{dancer}.

Here are our assumptions on $\Omega$.
\begin{itemize}
\item[(a1)] $\Omega$ is  a
bounded domain with a ${\cal C}^2$ boundary, symmetric with respect to the $x_i$'s axes for $i=3,\dots,N,$ i.e.
$$(x_1,\dots,x_i,\dots,x_N)\in\Omega\  \Leftrightarrow\ (x_1,\dots,-x_i,\dots,x_N)\in\Omega\qquad\forall i=3,\ldots, N;$$
\item[(a2)] the relative boundary of $\Omega_0:=\Omega\cap\{x\in \R^N\ :\ x_3=\dots=x_N=0\}$ has a 
connected component $\Gamma$ satisfying
$$\nu_P\cdot (P-Q)>0\quad \forall P,Q\in \Gamma, \; P\neq Q$$ where $\nu_P$ is the unit outward normal to $\partial \Omega$ at $P$.
\end{itemize}
It is clear that if $\Omega$ is a two-dimensional  strictly convex domain, the above  assumptions are automatically satisfied and,  in particular, $\Gamma$ coincides with the exterior boundary of $\Omega.$
 More in general, we point out that
if  $N\ge 3,$ then
  $\Gamma$ turns out to be a closed geodesic of $\partial\Omega.$

 The main purpose of this paper is to prove that if $\Omega$ satisfies (a1), (a2) then, provided that $\delta$ is sufficiently small and $k$ is even and sufficiently large,  the problem \eqref{sch} admits a $k$-peak  solution  with $k$ alternate sign peaks
 aligned near
 $\Gamma$.
More precisely, the limiting configuration
can be described in the following way:   the $ k$  peaks lie in $\Omega_0$ and are arranged with alternate sign  at distance $\delta$ from
 $\Gamma$ and
 the distance between
 two consecutive peaks  is $2\delta$. Roughly speaking, the limit profile of such solution resembles a crown of peaks surrounding $\Gamma$.
Moreover the profile of each peak is similar to
a translation
of the rescaled ground state $w$.
Now we proceed to provide the exact formulation of the result.

\begin{theorem}
\label{th1}  Assume that hypotheses  (f1), (f2) and (f3) and (a1), (a2) hold.
 Then for any
$\delta_0>0$  there exist $\delta \in(0,\delta_0)$  and     an even  integer $k $ such that,
for $\e$ sufficiently small, the
problem \eqref{sch} has a  solution $v_\e\in H^2(\Omega)\cap H^1_0(\Omega)$ symmetric with respect to the $x_i$'s axes for $i=3,\dots,N,$ i.e.
$$v_\e(x_1,\dots,x_i,\dots,x_N)=v_\e(x_1,\dots,-x_i,\dots,x_N)\qquad\forall i=3,\ldots, N.$$
 Furthermore there exist points $P_{1}^\e,\ldots,
P_k^\e\in\Omega_0$ such that, as $\e\to  0^+$,

 \begin{equation}\label{deluxe}v_\e (x) = \sum_{i=1}^k(-1)^i w\Big(\frac{x-P_{i}^\e}{\e}
\Big)+ o(e^{-\frac{\delta}{2\e}})\hbox{ uniformly for } x\in\overline{\Omega}.\end{equation}
Moreover, if $(P_1^\ast, \ldots, P_{k}^\ast)$ is the limit of a subsequence of
 $( P_{1}^{\e},\ldots, P_{k}^{\e})$ as $\e\to 0^+$, then\footnote{The relation ``$P_i^*<P_{i+1}^*$" refers to a cyclic order on the closed curve $\{P\in \Omega_0\,|\,  {\di}_{\Gamma}(P)=\delta\}.$}
 \begin{equation}\label{pack}  {\di}_{\Gamma}(P^*_i)=\delta,\;\; P_i^*<P_{i+1}^*,\;\;|P_i^*-P_{i+1}^*|=2\delta \quad \forall i=1,\ldots, k \quad (P_{k+1}^*:=P_1^*).\end{equation}
\end{theorem}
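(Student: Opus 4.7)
The plan is a Lyapunov--Schmidt finite-dimensional reduction, performed in the symmetric subspace $H_s\subset H^1_0(\Omega)$ of functions invariant under the reflections in (a1), followed by a constrained critical-point analysis of the reduced energy on a finite-dimensional set of admissible peak configurations.

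As ansatz I take
\[V_{\e,\mathbf P}=\sum_{i=1}^{k}(-1)^{i}\,PW_{\e,P_i},\qquad PW_{\e,P_i}:=\text{the $H^1_0(\Omega)$-projection of }w\!\bigl((\cdot-P_i)/\e\bigr),\]
with $\mathbf P=(P_1,\ldots,P_k)$ ranging over
\[\Lambda_{\e,\delta}=\Bigl\{\mathbf P\in\Omega_0^k\ :\ d_\Gamma(P_i)\in\bigl(\tfrac{\delta}{2},2\delta\bigr),\ |P_i-P_{i+1}|\in(\delta,4\delta),\ \text{cyclic order on }\Gamma^\delta\Bigr\},\]
where $\Gamma^\delta=\{d_\Gamma=\delta\}\cap\Omega_0$ is the inner parallel of $\Gamma$. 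Since each $P_i$ lies on the symmetry plane of (a1) and $w$ is radial, $V_{\e,\mathbf P}\in H_s$ automatically; the parity assumption on $k$ is forced by the cyclic compatibility $(-1)^{k+1}=(-1)^{1}$. Writing $v=V_{\e,\mathbf P}+\phi$ with $\phi\in H_s$ orthogonal to $K_{\e,\mathbf P}:=\operatorname{span}\{\partial_{P_i}PW_{\e,P_i}\}\cap H_s$, the non-degeneracy (f3) together with the mutual separation of the peaks yields uniform invertibility of the projected linearization, and a standard contraction argument produces a unique $\phi=\phi_{\e,\mathbf P}\in C^1(\Lambda_{\e,\delta};H_s)$ with $\|\phi_{\e,\mathbf P}\|_\e=O(\e^{N/2}e^{-(1+\eta)\delta/\e})$ for some $\eta>0$. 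It then suffices to locate a critical point of $\mathcal M_\e(\mathbf P):=J_\e(V_{\e,\mathbf P}+\phi_{\e,\mathbf P})$ in the interior of $\Lambda_{\e,\delta}$.

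Using the decay (\ref{wdecay}) and the standard projection estimates for $PW_{\e,P}$ near $\partial\Omega$, I expect
\[\mathcal M_\e(\mathbf P)=k\e^N I_0+\e^N\!\left[\alpha\sum_{i=1}^k w\!\bigl(\tfrac{2d(P_i,\partial\Omega)}{\e}\bigr)+\beta\sum_{i=1}^k w\!\bigl(\tfrac{|P_i-P_{i+1}|}{\e}\bigr)\right](1+o(1)),\]
with $I_0=J_{\R^N}(w)$ and $\alpha,\beta>0$. Both sums are \emph{repulsive}: the Dirichlet harmonic image of a peak has the opposite sign and so repels the peak at well-separated scales, and the sign alternation $(-1)^i(-1)^{i+1}=-1$ makes adjacent peaks also opposite-sign and hence mutually repulsive. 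By (a2), for $P\in\Omega_0$ with $d_\Gamma(P)\le2\delta$ the nearest point of $\partial\Omega$ actually lies on $\Gamma$, so $d(P,\partial\Omega)=d_\Gamma(P)$ and no distant portion of $\partial\Omega$ enters at leading order. Vanishing of the gradient of the leading part then forces the balance $2d(P_i,\partial\Omega)=|P_{i-1}-P_i|=|P_i-P_{i+1}|$ for every $i$, which is exactly (\ref{pack}). Equidistant alternating peaks on $\Gamma^\delta$ realise this precisely when $k\cdot 2\delta=|\Gamma^\delta|$; the monotone continuous dependence of $|\Gamma^\delta|$ on $\delta$ allows me, given $\delta_0>0$, to choose a large even integer $k$ and a $\delta\in(0,\delta_0)$ making the matching exact, yielding a symmetric candidate configuration $\mathbf P^0$ in the interior of $\Lambda_{\e,\delta}$.

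A constrained minimisation (or local degree) argument in the spirit of \cite{dapi1} then upgrades $\mathbf P^0$ to a genuine critical point $\mathbf P^\e$ of $\mathcal M_\e$ with $\mathbf P^\e\to\mathbf P^0$; by the symmetric criticality principle, $v_\e:=V_{\e,\mathbf P^\e}+\phi_{\e,\mathbf P^\e}$ is a critical point of $J_\e$ on the full space $H^1_0(\Omega)$, and the $L^\infty$ decay of $\phi_{\e,\mathbf P^\e}$ delivers (\ref{deluxe}) and (\ref{pack}). The hard part is the tangential analysis: the leading expansion above is \emph{flat} along the rigid rotation of the equidistant array on $\Gamma^\delta$, since for equidistant peaks on $\Gamma^\delta$ all chord lengths and all boundary distances coincide to leading order; consequently the rotational degeneracy is lifted only by higher-order corrections involving the local curvature of $\Gamma$. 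Here the strict convexity (a2) is used in full, guaranteeing uniqueness of the foot of $P\in\Omega_0$ on $\Gamma$, strict convexity of $Q\mapsto|P-Q|$ along $\Gamma^\delta$, and positive-definiteness of the tangential reduced Hessian transversally to the rotation. Without (a2) the tangential block could be degenerate and the whole reduction could fail to deliver the claimed alternate-sign crown along $\Gamma$.
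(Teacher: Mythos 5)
Your overall strategy---Lyapunov--Schmidt reduction in the symmetric class followed by a constrained variational argument for the reduced energy---is the same as the paper's, but two of your intermediate steps contain genuine errors that the paper's argument is specifically designed to avoid.

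First, the statement ``equidistant alternating peaks on $\Gamma^\delta$ realise this precisely when $k\cdot 2\delta=|\Gamma^\delta|$'' is false. For $k$ points spaced at equal \emph{arc length} on a strictly convex smooth curve that is not a circle, the chord lengths $|P_i-P_{i+1}|$ are neither all equal nor equal to $2\delta$; and even on a circle the chord is strictly shorter than the arc, so the matching condition $|P_i-P_{i+1}|=2\delta$ has nothing to do with $k\cdot 2\delta=|\Gamma^\delta|$. The existence of a $\delta\in(0,\delta_0)$ and an even $k$ for which a cyclically ordered $k$-tuple on $\gamma_\delta$ has \emph{all} consecutive chords equal to $2\delta$ is exactly the non-trivial content of Lemma \ref{lem1} in the paper. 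There it is obtained by a completely different mechanism: one fixes $k>\ell(\Gamma)/(2\delta_0)$ (so the constraint $|P_i-P_{i+1}|\geq 2\delta_0$ cannot hold), lets $\delta^*$ be the supremum of those $\delta$ for which $|P_i-P_{i+1}|\geq 2\delta$ can be achieved, and then shows by a sliding/perturbation argument (moving the points slightly ``backwards'' and using the strict convexity property \eqref{mimim}) that at $\delta^*$ \emph{all} consecutive gaps must equal $2\delta^*$. Your construction as written does not deliver a configuration satisfying \eqref{pack}, so your candidate $\mathbf P^0$ need not exist.

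Second, your worry that ``the leading expansion is flat along the rigid rotation of the equidistant array'' and that the degeneracy ``is lifted only by higher-order corrections involving the local curvature'' is misplaced and, as you yourself remark, is the hard part of your route. The paper does not need to lift any degeneracy or to compute a tangential Hessian: it exhibits an open configuration set $U_\eta$ on which $\varphi_k=\min_{i\neq j}\{d_\Gamma(P_i),|P_i-P_j|/2\}$ has a strict interior maximum (Proposition \ref{lemderlem2}), hence $M_\e[\mathbf P]=e^{-\frac{2\varphi_k(\mathbf P)+o(1)}{\e}}+O(e^{-\mu/\e})$ has a strict interior minimum on $\overline{U_\eta}$, regardless of whether that minimum is nondegenerate or forms a continuum. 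This is a purely topological min--max argument based on the geometric Lemmas \ref{lem1}--\ref{lem20}; nondegeneracy in the tangential directions is never used and need not hold. If you retain your gradient-equation route you would indeed face the degeneracy problem you describe, but the fix is not ``higher-order curvature corrections'': it is to replace pointwise criticality by a compactness/boundary-comparison argument as in the paper.
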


\bigskip

 The assumption that $\Omega$ has a plane of symmetry allows to locate the points where the spikes occur along a curve in the plane. Indeed,  in the general case the problem of packing the spikes near $\partial \Omega$ in equilibrium is not so simple.  We believe that more complicated arrangements should exist depending
on the geometry of $\partial \Omega$. In particular, as we mentioned above, we conjecture that a possible balanced pattern  may occur for spikes tightly aligned near a closed geodetics of $\partial\Omega$.

We now outline the main idea of the proof of Theorem \ref{th1}.

As with many of the other results mentioned above, a Lyapunov-Schmidt reduction scheme is used in the vicinity of multi-peaked approximate solutions.  A sketch of this procedure is
given in  Section 2. By carrying out the reduction process,  we reduce the problem of finding multiple interior spike solutions for \eqref{sch} to the problem of finding critical points of a vector field on the finite dimensional manifold  consisting of multi-spike states. More precisely, in order to find such a  solution the limiting location of the spikes
should be critical for a functional of this type
\begin{equation}\label{redu1}\sum_{i=1}^ke^{-\frac{2{\di}_{\partial\Omega}(P_i)+o(1)}{\e}}
 -\sum_{i,j=1\atop i< j}^k(-1)^{i+j}e^{-\frac{|P_i-P_j|+o(1)}{\e}}+h.o.t.\end{equation}
on a suitable configuration set in $ \Omega^k$.
The terms $e^{-\frac{2{\di}_{\partial\Omega}(P_i)+o(1)}{\e}}$ represent the boundary
effect on each  spike $P_i$, created by the boundary condition, while the terms $e^{-\frac{|P_i-P_j|+o(1)}{\e}}$ are due to the interaction among the peaks which has an
attractive or a repulsive effect according to their respective sign.  The presence of a factor 2 in the exponentials $e^{-\frac{2{\di}_{\partial\Omega}(P_i)+o(1)}{\e}}$
suggests that  the effect of the boundary  acts exactly as
an opposite \textit{virtual} peak reflected in $\partial\Omega$.
Moreover the setting of Theorem \ref{th1} suggests that we should restrict ourselves to seeking equilibrium points $P_i\in \Omega_0$, i.e. 
$$P_i=(\xi_i,{\bf 0}), \quad \xi_i\in \R^2,\;{\bf 0}=(0,\ldots, 0)\in \R^{N-2}.$$ 
The different interaction effects of the boundary and the peaks, which
depend upon their distance in an exponential way,
provide
the functional described by \eqref{redu1} with a suitable local minimum structure.

To give an idea how to apply  the minimization argument,
let us make the following heuristic considerations.
If we look for equilibrium points  $P_1,\ldots,P_k$ which are packed in a tight strip near $\Gamma$ in such a way that the neighboring spikes have opposite sign, then the peaks having the same signs are non-interacting to the leading order, and this implies that the terms $e^{-\frac{|P_i-P_j|+o(1)}{\e}}$ do not contribute to the main term of the expansion if $(-1)^{i+j}=1$, so  \eqref{redu1} actually equals
\beq\label{redu2}\sum_{i=1}^ke^{-\frac{2{\di}_{\partial\Omega}(P_i)+o(1)}{\e}}
 +\sum_{i,j=1\atop i< j}^ke^{-\frac{|P_i-P_j|+o(1)}{\e}}+h.o.t..\eeq  
By \eqref{redu2} we get that the spikes $P_1,\ldots, P_k$  are repelled  from $\partial \Omega$ and doubly from one another, then they may exist in equilibrium when they are packed exactly  as in \eqref{pack}: indeed the arrangement \eqref{pack} assures that the nonvanishing forces exerted by the boundary and the neighboring spikes balance giving rise to the equilibrium configuration $(P_1^*,\ldots,P_k^*)$.

The paper is organized as follows.
Section 2 contains the reduction to the finite dimensional problem, which is done by using the Lyapunov-Schmidt decomposition at the approximate solutions.
In Section 3 we study a minimization problem which provides the \textit{equilibrium} arrangement of the alternate sign spikes around $\Gamma$; then we show that the solution of the minimization problem is indeed associated to a  solution of \eqref{sch} which satisfies all the properties of Theorem \ref{th1}.

\section{The reduction process: sketch of the proof}
In this section we outline the main steps of the so called
{\textit{finite dimensional reduction}, which reduces the problem
to finding a critical point for a functional on a finite
dimensional space. Since this procedure is carried out in a standard way, we omit the proofs and refer to  \cite{cw,gpw,gw,nw} for technical details.

First we introduce some notation and present some important estimates on the approximate solutions.
Associated with problem \eqref{sch} is the following energy functional
$$J_\e(v)=\frac12\into \big(\e^2|\nabla v|^2+v^2\big) dx-\into F(v) dx ,\quad v\in H^1_0(\Omega),$$
where $F(t)=\int_0^t f(s) ds$.

For $P\in\Omega$ let $w_\epi$  be the unique solution of
$$\left\{\begin{aligned}&\e^2\Delta v-v +f\Big(w\Big(\frac{x-P}{\e}\Big)\Big)=0&\hbox{ in }& \Omega,\\ & v=0&\hbox{ on }&\partial \Omega.\end{aligned}\right.$$$w_\epi$ is a kind of projection of $w(\frac{x-P}{\e})$ onto the space $ H^1_0(\Omega)$.

Then, if we set
$$\psi_{\e,P}(x):=-\e\log \Big(w\Big(\frac{x-P}{\e}\Big)-w_\epi\Big),\quad \psi_\e(P)=\psi_{\e,P}(P),$$
it is well known that\begin{equation}\label{known0}\psi_{\e,P}(x)\to \inf_{z\in\partial\Omega}\{|z-x|+|z-P|\}
\end{equation}  and, consequently,
\begin{equation}\label{known}\psi_\e(P)\to 2\dd(P)
\end{equation}
uniformly for $x\in \overline{\Omega}$ and $P$ on compact subsets of $\Omega$
(see \cite{nw}, for instance).

Fixed $k\geq 1$, we define the configuration space
$$\Lambda_\eta:=\Big\{(P_1,\ldots,P_k)\in \Omega^{k}\,\Big|\, \di_{\partial\Omega}(P_i)>\eta\;\,\forall i,\;\; |P_i-P_j|>\eta\hbox{ for }i\neq j \Big\}$$
where $\eta>0$ is  a sufficiently small number.
For $\p=(P_1,\ldots, P_k)\in
\Lambda_\eta$ we set
$$w_{\e,\p}=\sum_{i=1}^k(-1)^i w_{\e, P_i}.$$

We look for a solution to \eqref{sch} in a small neighborhood of
the first approximation $ w_{\e,\p}$, i.e. solutions of the form as
$v:= w_{\e,\p}+\phi,$ where the rest term $\phi$ is \textit{small}.
To this aim, for $v\in H^2(\Omega) $ we put
$${\cal S}_\e[v]=\e^2\Delta v-v+f(v).$$Then the problem \eqref{sch} is equivalent to solve
$${\cal S}_\e[v]=0, \quad v\in H^2(\Omega)\cap H^1_0(\Omega).$$
 We introduce the following approximate cokernel and kernel
$$\begin{aligned}
& {\mathcal{K}}_{\e,\p}={\rm span}\left\{{\partial w_{\e,\p}\over \partial P_i^l}\ :\ i=1,\dots,k,\ l=1,\dots,N\right\}\subset { H}^2(\Omega)\cap H^1_0(\Omega),\\
& {\mathcal{C}}_{\e,\p}={\rm span}\left\{{\partial w_{\e,\p}\over \partial P_i^l}\ :\ i=1,\dots,k,\ l=1,\dots,N\right\}\subset { L}^2(\Omega),\\
\end{aligned}$$
 denoting by $P_{i}^l$ the $l$-th component of
$P_i$ for $l=1,\ldots, N$.
The idea is that we first solve $\phi=\phi_{\e,\p}$ in  ${\mathcal{K}}_{\e,\p}^\perp$, where the orthogonal is taken with respect to the scalar product in $H^1_0(\Omega):$
$$\langle u, v \rangle_\e=\into\big( \e^2\nabla u\nabla v+uv\big)\,dx.$$
The following lemma is proved in \cite{cw,gw}.
  \begin{lemma}\label{reg}
Provided that $\e>0$ is sufficiently small,
   for every    $ {\bf P }\in
   \Lambda_\eta$ there exists a unique  $ \phi_{\e,\bf P }\in {\mathcal{K}}_{\e,\p}^\perp$ such that
   \beq\label{sati1}{\cal S}_\e [w_{\e,\p}+\phi_{\e,\p}]\in {\mathcal{C}}_{\e,\p}.\eeq
 Moreover
 the map $\p\in
 \Lambda_\eta\mapsto \phi_{\e,\bf P}\in H^1_0(\Omega)$ is ${\mathcal C}^1$  and
 \beq\label{sati2}|\phi_{\e,\p}|\le C\e^{-\left(1+{\sigma\over2}\right){\varphi_{k}(\p)\over\e}}\eeq
 where the function  $\varphi_k:\Omega^k\rightarrow\R$  is defined by
\begin{equation*}
\varphi_k(\p):=\min\limits_{i,j=1,\dots,k\atop i\not =j}\left\{ {\di}_{\Gamma}(P_i),\
{|P_i-P_j|\over2}\right\},\quad \p:=(P_1,\dots,P_k) . \end{equation*}
   \end{lemma}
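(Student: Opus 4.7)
The plan is to realize this as a standard Lyapunov--Schmidt reduction and solve for $\phi_{\e,\p}$ via a contraction mapping argument after inverting the linearized operator.

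Let $\Pi_{\e,\p}:L^2(\Omega)\to\mathcal{C}_{\e,\p}^\perp$ denote the $L^2$-orthogonal projection, and write
$$\mathcal S_\e[w_{\e,\p}+\phi]=\mathcal S_\e[w_{\e,\p}]+L_{\e,\p}\phi+N_{\e,\p}(\phi),$$
where the linearized operator is $L_{\e,\p}\phi:=\e^2\Delta\phi-\phi+f'(w_{\e,\p})\phi$ and $N_{\e,\p}(\phi):=f(w_{\e,\p}+\phi)-f(w_{\e,\p})-f'(w_{\e,\p})\phi$. The condition \eqref{sati1} is equivalent to requiring $\phi\in\mathcal{K}_{\e,\p}^\perp$ (orthogonality in $\langle\cdot,\cdot\rangle_\e$) and
$$\Pi_{\e,\p}\bigl(L_{\e,\p}\phi+\mathcal S_\e[w_{\e,\p}]+N_{\e,\p}(\phi)\bigr)=0.$$

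First I would establish the key linear theory: the map $\Pi_{\e,\p}\circ L_{\e,\p}:\mathcal{K}_{\e,\p}^\perp\to \mathcal{C}_{\e,\p}^\perp$ is invertible, with inverse $T_{\e,\p}$ bounded uniformly in $\e$ small and $\p\in\Lambda_\eta$. Indeed, the limit problem is the linearization around the ground state $w$ on $\R^N$, whose kernel by (f3) is spanned by $\partial w/\partial z_l$. A contradiction-and-blow-up argument, where one rescales around each peak $P_i$ and extracts weak limits, forces any would-be unbounded sequence of $\phi_n$ in the kernel of the reduced operator to converge to a nontrivial element of $\ker L$, which after the orthogonality against the translation directions gives zero, a contradiction. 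Details are standard and can be referenced from \cite{cw,gw,nw}.

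Second, I would estimate the error $\mathcal S_\e[w_{\e,\p}]$. Since each $w_{\e,P_i}$ solves $\e^2\Delta w_{\e,P_i}-w_{\e,P_i}+f(w(\cdot-P_i)/\e))=0$, expanding $f(w_{\e,\p})=f\bigl(\sum_i(-1)^i w_{\e,P_i}\bigr)$ and using (f2) together with the decay \eqref{wdecay} and the classical estimate $w(x-P_i)/\e)-w_{\e,P_i}(x)=O(e^{-\psi_{\e,P_i}(x)/\e})$ from \eqref{known0}--\eqref{known}, one gets
$$\|\mathcal S_\e[w_{\e,\p}]\|_{L^2}\le C\, e^{-(1+\sigma/2)\varphi_k(\p)/\e},$$
the dominant contribution coming from either the boundary reflection (distance $2\dd(P_i)$) or the nearest neighbour peak distance $|P_i-P_j|$. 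The quantity $\varphi_k(\p)$ is exactly the minimum of these, and the exponent $1+\sigma/2$ comes from the Hölder regularity of $f'$ in (f2).

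Third, I would apply a contraction mapping argument to
$$\mathcal T_{\e,\p}(\phi):=-T_{\e,\p}\bigl(\Pi_{\e,\p}\mathcal S_\e[w_{\e,\p}]+\Pi_{\e,\p}N_{\e,\p}(\phi)\bigr)$$
on the ball $B_\e:=\{\phi\in\mathcal{K}_{\e,\p}^\perp : \|\phi\|_{H^1_0}\le C_0\, e^{-(1+\sigma/2)\varphi_k(\p)/\e}\}$ for a large constant $C_0$. The condition (f2) on $|f'(t+s)-f'(t)|$ yields $\|N_{\e,\p}(\phi)\|_{L^2}\le c\|\phi\|^{\min(2,p_3)}$, so for $\e$ small $\mathcal T_{\e,\p}$ maps $B_\e$ into itself and is a contraction; Banach's theorem gives existence and uniqueness of $\phi_{\e,\p}\in B_\e$, which proves \eqref{sati2}. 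The $C^1$ dependence on $\p$ then follows from the implicit function theorem applied to the $C^1$ map $(\phi,\p)\mapsto\Pi_{\e,\p}\mathcal S_\e[w_{\e,\p}+\phi]$, whose partial derivative in $\phi$ at $\phi_{\e,\p}$ is a small perturbation of the invertible operator $\Pi_{\e,\p}\circ L_{\e,\p}$.

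The main obstacle is the uniform invertibility of $\Pi_{\e,\p}\circ L_{\e,\p}$ on $\mathcal{K}_{\e,\p}^\perp$: one has to control the nearly-kernel directions generated by $k$ peaks simultaneously, for $\p$ ranging over all of $\Lambda_\eta$ and with alternating signs. The remaining pieces (the error estimate and the contraction argument) are routine once the linear theory is in place.
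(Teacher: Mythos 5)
Your proposal correctly reproduces the standard Lyapunov--Schmidt scheme that the paper itself defers to \cite{cw,gw}: linearize around $w_{\e,\p}$, invert the projected linearized operator on ${\mathcal{K}}_{\e,\p}^\perp$ via a blow-up and nondegeneracy argument, estimate the ansatz error by $Ce^{-(1+\sigma/2)\varphi_k(\p)/\e}$, and close with a contraction mapping and the implicit function theorem. One small remark: since $f$ is odd by (f1), $f'$ is even, so the alternating signs of the peaks add no new difficulty to the linear theory beyond the all-positive case treated in the cited references, and the obstacle you flag is precisely the one already resolved there.
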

\bigskip

   After that, we define a new functional:
\begin{equation*}
\label{Mept}  M_\e:
 \Lambda_\eta\to\R,\;\; M_\e[{\bf P }
]:= \frac{\e^{-N}}{\gamma}J_\e [  w_{\e,\p} + \phi_{\e,{\bf P }}] - \frac{c_1}{\gamma}
\end{equation*}
 where $ \phi_{\e,\bf P }$ has been constructed in Lemma \ref{reg} and
 $$c_1=\frac{k}{2}\intr |\nabla  w|^2dx-k\intr F(w) dx,\quad\gamma=\intr f(w)e^{x_1} dx.$$
Next proposition  contains the key expansion of $M_\e$
(see \cite{gw} for the proof).
\begin{proposition}\label{exp1}
The following asymptotic expansions  hold:
\begin{equation}\label{lonel}\begin{aligned}M_\e [ \p]=&\, \frac{1}{2}(1+o(1))\sum_{i=1}^{k}e^{-\frac{\psi_\e(P_i)}{\e}}-(1+o(1))\sum_{i,j=1\atop i<
j}^k(-1)^{i+j}w\Big(\frac{P_i-P_j}{\e}\Big)\end{aligned}
\end{equation}
uniformly for ${\bf P }=(P_1,\ldots,P_k)\in
\Lambda_\eta$.

\end{proposition}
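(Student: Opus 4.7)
My plan is first to show that the correction $\phi_{\e,\p}$ only contributes a higher-order remainder, so that the asymptotic of $M_\e[\p]$ is determined by $J_\e[w_{\e,\p}]$, and then to expand $J_\e[w_{\e,\p}]$ by separating the diagonal self-energies from the off-diagonal interaction and boundary-correction terms. Taylor expanding gives
$$J_\e[w_{\e,\p}+\phi_{\e,\p}] = J_\e[w_{\e,\p}] + \langle DJ_\e[w_{\e,\p}],\phi_{\e,\p}\rangle + O(\|\phi_{\e,\p}\|_\e^2),$$
and the bound of Lemma \ref{reg} makes the quadratic remainder of order $e^{-(2+\sigma)\varphi_k(\p)/\e}$, strictly smaller than the leading boundary term $\e^N e^{-\psi_\e(P_i)/\e}$ and the interaction $\e^N w((P_i-P_j)/\e)$ in view of \eqref{known}--\eqref{wdecay} and the definition of $\varphi_k$. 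The linear term is handled by integrating by parts and using $\e^2\Delta w_{\e,P_i}-w_{\e,P_i}=-f(w((\cdot-P_i)/\e))$ to convert it into the pairing of $\phi_{\e,\p}$ with $\sum_i(-1)^if(w((x-P_i)/\e))-f(w_{\e,\p})$, which is again of higher order by the H\"older-type regularity on $f$ in (f2) and Cauchy--Schwarz.

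For the main expansion I would write
$$J_\e[w_{\e,\p}] = \tfrac12\sum_{i,j}(-1)^{i+j}\langle w_{\e,P_i},w_{\e,P_j}\rangle_\e - \into F(w_{\e,\p})\,dx,$$
and integrate by parts to get $\langle w_{\e,P_i},w_{\e,P_j}\rangle_\e = \into f(w((x-P_i)/\e))w_{\e,P_j}\,dx$. For the diagonal $i=j$, decomposing $w_{\e,P_i} = w((x-P_i)/\e) - (w((x-P_i)/\e)-w_{\e,P_i})$ and rescaling $x = P_i+\e y$, the bulk piece yields $\e^N\intr f(w)w\,dy$, while the projection defect contributes $\gamma\,\e^N e^{-\psi_\e(P_i)/\e}(1+o(1))$ by the sharp asymptotic of \cite{nw,gw} based on \eqref{known0} and \eqref{wdecay}. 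For $i\ne j$, a Laplace-type computation using the pointwise decay $w(z)\sim A|z|^{-(N-1)/2}e^{-|z|}$ and the approximation $w_{\e,P_j}(P_i+\e y)\sim w((P_i-P_j)/\e)e^{-\hat z\cdot y}$ along $\hat z = (P_j-P_i)/|P_j-P_i|$ gives $\gamma\,\e^N w((P_i-P_j)/\e)(1+o(1))$, where the constant $\gamma=\intr f(w(y))e^{-\hat z\cdot y}\,dy$ is independent of $\hat z$ by the radial symmetry of $w$. Finally, Taylor expanding $F(w_{\e,\p})$ around the dominant peak at each point and using that $F$ is even and $f$ odd from (f1), the same-sign pairs contribute only the bulk $k\e^N\intr F(w)\,dy$, each pair of opposite-sign peaks contributes $2\gamma\e^N(-1)^{i+j}w((P_i-P_j)/\e)(1+o(1))$ (the factor $2$ from double-counting), and replacing $w((\cdot-P_i)/\e)$ by $w_{\e,P_i}$ inside $F$ produces the boundary defect $-\gamma\e^N e^{-\psi_\e(P_i)/\e}(1+o(1))$ per peak.

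Putting the pieces together and dividing by $\gamma\e^N$, the bulk contributions combine into $c_1/\gamma$ (which is cancelled by the subtraction in the definition of $M_\e$), the diagonal defects yield $\frac12(1+o(1))\sum_i e^{-\psi_\e(P_i)/\e}$ (the coefficient $\frac12$ arising from $-\frac12$ from the quadratic self-energy plus $+1$ from $-\into F$), and the off-diagonal pieces yield $-(1+o(1))\sum_{i<j}(-1)^{i+j}w((P_i-P_j)/\e)$ (coefficient $+1-2=-1$), which is exactly \eqref{lonel}. The main obstacle is making all $(1+o(1))$ factors uniform over $\p\in\Lambda_\eta$: this requires sharp uniform bounds on the projection defect $w((\cdot-P_i)/\e)-w_{\e,P_i}$, uniform control of the tail \eqref{wdecay}, and the H\"older-type estimate in (f2) on $f'$ to ensure all Taylor remainders are strictly subdominant to both leading scales simultaneously.
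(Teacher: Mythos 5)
Your sketch is correct and follows the standard Lyapunov--Schmidt energy expansion. The paper does not supply its own proof of this proposition --- it refers the reader to \cite{gw} --- and your argument is precisely the computation that reference (adapted from the Neumann to the Dirichlet setting and to the alternating signs) carries out, with the coefficients $\tfrac12$ and $-1$ assembled in the same way from the quadratic self-/cross-energies and the $\int F$ term, and with the same reliance on \eqref{sati2}, \eqref{known0} and \eqref{wdecay} to control the correction $\phi_{\e,\p}$ and to evaluate the Laplace-type integrals.
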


\begin{remark}\label{energy3} By using \eqref{known} and \eqref{wdecay} the expansion \eqref{lonel} can be rewritten as
\begin{equation*}\begin{aligned}M_\e [\p]=&\, \sum_{i=1}^{k}e^{-\frac{2\dd(P_i)+o(1)}{\e}}-\sum_{i,j=1\atop i<
j}^k(-1)^{i+j}e^{-\frac{|P_i-P_j|+o(1)}{\e}}
\end{aligned}
\end{equation*}
uniformly for ${\bf P }=(P_1,\ldots,P_k)\in
\Lambda_\eta$.
\end{remark}

\noindent Finally the next lemma  concerns the relation between
the critical points of $M_\e $ and those of $J_\e $. It is quite
standard in singular perturbation theory; its proof can be found
in \cite{gw}, for instance.

\begin{lemma}\label{relation}
Let ${\bf P }_\e  \in \Lambda_\eta$ be a critical point of $M_\e $.
Then, provided that $\e>0$ is sufficiently small, the
corresponding function $ v_\e= w_{\e,\p_\e} + \phi_{\e, {\bf P
}_\e}$ is  a solution of (\ref{sch}). \end{lemma}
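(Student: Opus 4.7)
The plan is the classical Floer--Weinstein reduction argument: to show that $\mathcal{S}_\e[v_\e]=0$ in $L^2(\Omega)$ by exploiting the variational nature of $J_\e$. By Lemma \ref{reg} the residual $\mathcal{S}_\e[w_{\e,\p_\e}+\phi_{\e,\p_\e}]$ lies in $\mathcal{C}_{\e,\p_\e}$, so there exist real scalars $c_{jm}^\e$ with
$$\mathcal{S}_\e[w_{\e,\p_\e}+\phi_{\e,\p_\e}]=\sum_{j=1}^{k}\sum_{m=1}^{N} c_{jm}^\e\,\frac{\partial w_{\e,\p_\e}}{\partial P_j^m}.$$
The task then reduces to proving $c_{jm}^\e=0$ for every $j,m$, which immediately upgrades $v_\e$ to a strong (hence classical) solution of \eqref{sch}.

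To use the criticality of $\p_\e$, I would differentiate $M_\e$ in each coordinate $P_i^l$. Exploiting the $\mathcal{C}^1$-regularity of $\p\mapsto\phi_{\e,\p}$ from Lemma \ref{reg} together with the integration-by-parts identity
$$\int_\Omega\bigl(\e^2\nabla v\cdot\nabla\varphi+v\varphi-f(v)\varphi\bigr)\,dx=-\int_\Omega\mathcal{S}_\e[v]\,\varphi\,dx\qquad(v\in H^2\cap H^1_0,\ \varphi\in H^1_0),$$
the vanishing-gradient condition $\partial_{P_i^l}M_\e[\p_\e]=0$ transforms, after substituting the above expansion of the residual, into the linear homogeneous system
$$\sum_{j,m}c_{jm}^\e\,\mathcal{A}_{(j,m),(i,l)}^\e=0\qquad\forall i,l,$$
where
$$\mathcal{A}_{(j,m),(i,l)}^\e=\int_\Omega\frac{\partial w_{\e,\p_\e}}{\partial P_j^m}\,\frac{\partial w_{\e,\p_\e}}{\partial P_i^l}\,dx+\int_\Omega\frac{\partial w_{\e,\p_\e}}{\partial P_j^m}\,\frac{\partial \phi_{\e,\p_\e}}{\partial P_i^l}\,dx.$$
The conclusion will follow as soon as I show that $\mathcal{A}^\e$ is invertible for $\e$ small.

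For the first matrix, the rescaling $x=P_i^\e+\e z$ combined with the exponential decay \eqref{wdecay} of $w$ and the separation of the peaks on $\Lambda_\eta$ yields, to leading order, the block-diagonal expression
$$\frac{\e^{N-2}}{N}\Bigl(\int_{\R^N}|\nabla w|^2\,dz\Bigr)\,\delta_{ij}\,\delta_{lm}+o(\e^{N-2}),$$
whose off-diagonal entries between distinct peaks are exponentially small. For the second (cross) matrix I would differentiate the orthogonality relation $\langle\phi_{\e,\p},\partial_{P_j^m}w_{\e,\p}\rangle_\e=0$ with respect to $P_i^l$, obtaining
$$\langle\partial_{P_i^l}\phi_{\e,\p},\partial_{P_j^m}w_{\e,\p}\rangle_\e=-\langle\phi_{\e,\p},\partial_{P_i^l}\partial_{P_j^m}w_{\e,\p}\rangle_\e,$$
and then combine the sharp bound \eqref{sati2} on $\phi_{\e,\p}$ with standard elliptic regularity to pass from the $\e$-weighted inner product to the $L^2$-pairing; this shows that every entry of the second matrix is of strictly smaller order than $\e^{N-2}$. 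Consequently $\mathcal{A}^\e/\e^{N-2}$ converges to a non-singular multiple of the identity, forcing $c_{jm}^\e=0$ and hence $\mathcal{S}_\e[v_\e]=0$.

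The main obstacle is precisely the quantitative control of the cross block of $\mathcal{A}^\e$: one must bound the $\p$-derivatives of $\phi_{\e,\p}$ in an $L^2$-sense against the vectors $\partial_{P_j^m}w_{\e,\p}$ (whose $L^2$-norm is of order $\e^{(N-2)/2}$) and verify that the result is dominated by the leading $\e^{N-2}$ diagonal. This rests essentially on the sharp estimate \eqref{sati2} and on the smooth dependence of $\phi_{\e,\p}$ on $\p$; once these ingredients are in place, the remainder of the proof is routine chain-rule bookkeeping of the type carried out in \cite{gw}, from which this statement is quoted.
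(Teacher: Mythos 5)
Your proposal is correct and follows essentially the same argument the paper refers to: the paper itself gives no proof of Lemma \ref{relation} but defers to the Lyapunov--Schmidt reduction in \cite{gw}, and your outline (expand the residual in the cokernel, use $\partial_{P_i^l}M_\e[\p_\e]=0$ to obtain a homogeneous linear system for the multipliers $c^\e_{jm}$, then show the coefficient matrix is a small perturbation of the non-degenerate block-diagonal matrix $\frac{\e^{N-2}}{N}\bigl(\int_{\R^N}|\nabla w|^2\bigr)\delta_{ij}\delta_{lm}$) is precisely that standard scheme, including the correct identification of the cross term $\int\partial_{P_j^m}w_{\e,\p}\,\partial_{P_i^l}\phi_{\e,\p}\,dx$ as the step requiring the $\mathcal{C}^1$-dependence of $\phi_{\e,\p}$ on $\p$ together with the smallness estimate \eqref{sati2}.
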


We finish this section with a symmetry property of the reduction process.

\begin{lemma}\label{symmetry}
Suppose $\O$ is invariant under the action of an orthogonal transformation $T\in {\cal O}(N)$. Let
$\Lambda_\eta^T:=\{{\bf P}\in\Lambda_\eta: TP_i=P_i\ \ \forall i\}$ denote
the fixed point set of $T$ in $\Lambda_\eta$. Then a point
${\bf P}\in\Lambda_\eta^T$ is a critical point of $J_\e$ if it is a
critical point of the constrained functional $J_\e|\Lambda_\eta^T$.
\end{lemma}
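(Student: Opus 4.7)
The plan is to show that the Lyapunov–Schmidt reduction behind the definition of $M_\e$ is equivariant under the natural action of $T$, so $M_\e$ inherits the $T$-symmetry, and then to invoke a standard symmetric criticality argument on the finite-dimensional manifold $\Lambda_\eta$. (I read the statement as $M_\e$ rather than the original $J_\e$, since $J_\e$ is defined on $H^1_0(\Omega)$ and not on $\Lambda_\eta^T$; the two agree on the multi-spike family via Lemma \ref{relation}.)

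First I would introduce the linear isometry $R\colon H^1_0(\Omega)\to H^1_0(\Omega)$, $Ru:=u\circ T^{-1}$. Because $T\in{\cal O}(N)$ preserves $\Omega$, $R$ is well defined, preserves the inner product $\langle\cdot,\cdot\rangle_\e$, commutes with the Laplacian and with $f$ (hence with the operator ${\cal S}_\e$), and satisfies $J_\e\circ R=J_\e$. Using the radial symmetry of $w$ from \cite{gnn} I have $w((Tx-TP)/\e)=w((x-P)/\e)$; substituting $x=Ty$ in the PDE that defines $w_{\e,P}$ and invoking uniqueness gives $R w_{\e,P}=w_{\e,TP}$, and thus $R w_{\e,\p}=w_{\e,T\p}$, where $T\p:=(TP_1,\dots,TP_k)$.

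Next I differentiate this relation with respect to $P_i^l$: it follows that $R$ maps ${\cal K}_{\e,\p}$ and ${\cal C}_{\e,\p}$ isomorphically onto ${\cal K}_{\e,T\p}$ and ${\cal C}_{\e,T\p}$ respectively, and since $R$ is an $\langle\cdot,\cdot\rangle_\e$-isometry, it also sends ${\cal K}_{\e,\p}^\perp$ to ${\cal K}_{\e,T\p}^\perp$. Applying $R$ to (\ref{sati1}) and using that $R$ commutes with ${\cal S}_\e$, the function $R\phi_{\e,\p}$ satisfies exactly the conditions that characterize $\phi_{\e,T\p}$ in Lemma \ref{reg}; by the uniqueness there, $R\phi_{\e,\p}=\phi_{\e,T\p}$. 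Combining this with $J_\e\circ R=J_\e$ shows $M_\e[T\p]=M_\e[\p]$ for every $\p\in\Lambda_\eta$.

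Finally, for the symmetric criticality step, decompose $(\R^N)^k=V^T\oplus W$, where $V^T$ is the fixed subspace of the diagonal action of $T$ on $(\R^N)^k$ and $W:=(V^T)^\perp$. The tangent space to $\Lambda_\eta^T$ at any $\p\in\Lambda_\eta^T$ is exactly $V^T$. Differentiating the identity $M_\e\circ T=M_\e$ at $\p$ shows $\nabla M_\e(\p)\in V^T$, while the assumption that $\p$ is critical for $M_\e|\Lambda_\eta^T$ forces $\nabla M_\e(\p)\perp V^T$; hence $\nabla M_\e(\p)=0$, and Lemma \ref{relation} produces the corresponding solution of (\ref{sch}). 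The only non-bookkeeping ingredient is the equivariance of the reduction, and this is essentially dictated by the uniqueness clause in Lemma \ref{reg}; the symmetric criticality step is painless since we are working on a finite-dimensional configuration space where the fixed/orthogonal decomposition is immediate.
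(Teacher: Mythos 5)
Your proof is correct and follows essentially the same route as the paper: establish the equivariance $\phi_{\e,T\p}=\phi_{\e,\p}\circ T^{-1}$ via the uniqueness clause in Lemma~\ref{reg}, deduce $T$-invariance of the reduced energy, and apply symmetric criticality on the finite-dimensional configuration space. The paper states the last step as ``the lemma follows immediately,'' whereas you spell out the $V^T\oplus W$ decomposition and the fact that $\nabla M_\e(\p)$ lies simultaneously in $V^T$ and in $(V^T)^\perp$; you also correctly read $J_\e$ in the statement as the reduced functional $M_\e$ on $\Lambda_\eta$, which matches the paper's implicit abuse of notation.
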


\begin{proof}
We first investigate the symmetry inherited by the function
$\phi_{\e,\hbox{\scriptsize$\p$}}$ obtained in Lemma \ref{reg}.
Setting $T{\bf P}:=(TP_1,\ldots,TP_k)$ for ${\bf P}=(P_1,\ldots,P_k)\in\O^k$,
we claim that
\begin{equation}\label{cell}
\phi_{\e,{\bf P}}
 =\phi_{\e,T{\bf P}}\circ T\quad
 \forall {\bf P}\in \Lambda_\eta.
\end{equation}
Indeed, because of the symmetry of the domain, we see that
$$
w_{\e,P_i}= w_{\e, TP_i}\circ T
$$
and
$$
{\cal K}_{\e,{\bf P}}
 =\{f\circ T\,|\, f\in {\cal K}_{\e,T{\bf P}}\},\qquad
{\cal K}_{\e,{\bf P}}^\perp
 =\{f\circ T\,|\, f\in {\cal K}^\perp_{\e,T{\bf P}}\}.
$$
Then the function  $\phi_{\e,T{\bf P}}\circ T$ belongs to
$ {\cal K}_{\e,{\bf P }}^\perp$ and satisfies
\eqref{sati1} and \eqref{sati2}. The uniqueness of the solution $\phi$ implies \eqref{cell}.
Therefore the functional $J_\e$ satisfies
$$
J_\e({\bf P})=J_\e(T{\bf P}).
$$
The lemma follows immediately.
\end{proof}

\section{A minimization problem}

In this section we will employ the reduction approach to construct the solutions stated in
Theorem \ref{th1}. The results obtained in the previous section imply that our problem reduces
to the study of critical points of the functional $M_\e$. In what follows, we assume assumptions (a1), (a2). We get the following result.
\begin{lemma}\label{validity}
If $\p$ is a critical point of ${M}_\e|_{\Omega_0}$, then
${\bf P}$ is a critical point of $M_\e$.
\end{lemma}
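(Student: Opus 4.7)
The plan is to apply the principle of symmetric criticality, extending Lemma \ref{symmetry} from a single orthogonal transformation to the group $G$ generated by the reflections
$$
T_i:(x_1,\dots,x_i,\dots,x_N)\longmapsto(x_1,\dots,-x_i,\dots,x_N),\qquad i=3,\dots,N.
$$
Assumption (a1) guarantees that each $T_i$ is an isometry of $\R^N$ leaving $\Omega$ invariant, so $G$ does too. Moreover, the diagonal action of $G$ on $\Omega^k$ has fixed point set exactly $\Omega_0^k$, since a point in $\R^N$ is fixed by every $T_i$, $i\geq 3$, if and only if its last $N-2$ coordinates vanish.

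First I would verify that, in exact analogy with \eqref{cell}, for every $i=3,\dots,N$ and every $\p\in\Lambda_\eta$ one has $\phi_{\e,T_i\p}=\phi_{\e,\p}\circ T_i$, and consequently $M_\e(T_i\p)=M_\e(\p)$. This proceeds identically to the proof of Lemma \ref{symmetry}: the ansatz satisfies $w_{\e,\p}=w_{\e,T_i\p}\circ T_i$ by the $T_i$-symmetry of $\Omega$, the spaces $\mathcal K_{\e,\p}^\perp$ are permuted in the corresponding way, and the uniqueness in Lemma \ref{reg} forces the claimed identity for $\phi_{\e,\p}$; invariance of $J_\e$ (via a change of variables) then passes to $M_\e$.

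Now suppose $\p=(P_1,\dots,P_k)\in\Omega_0^k\cap\Lambda_\eta$ is a critical point of $M_\e|_{\Omega_0^k}$. The tangent space to $\Omega_0^k$ at $\p$ is spanned by $\partial/\partial P_j^l$ with $j=1,\dots,k$ and $l=1,2$, so the hypothesis immediately yields
$$
\frac{\partial M_\e}{\partial P_j^l}(\p)=0,\qquad j=1,\dots,k,\;l=1,2.
$$
For the remaining directions $l\in\{3,\dots,N\}$ I would differentiate the invariance identity $M_\e(\p)=M_\e(T_l\p)$ in the $l$-th component of the $j$-th peak. Since $T_l$ reverses the sign of the $l$-th coordinate and fixes all others, the chain rule gives
$$
\frac{\partial M_\e}{\partial P_j^l}(\p)=-\frac{\partial M_\e}{\partial P_j^l}(T_l\p);
$$
but $\p\in\Omega_0^k$ implies $T_l\p=\p$, so this derivative equals its own opposite and hence vanishes. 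Combined with the previous identities, $\nabla M_\e(\p)=0$, which by Lemma \ref{relation} is the desired criticality.

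I do not expect any substantial obstacle here: the argument is a direct application of symmetric criticality, and the $T_i$-invariance of $M_\e$ is routine once the uniqueness in Lemma \ref{reg} is in hand, exactly as exploited in Lemma \ref{symmetry}. The only bookkeeping point is checking that the fixed sets of the several $T_i$ intersect to give $\Omega_0^k$, which is immediate from the definition of $\Omega_0$.
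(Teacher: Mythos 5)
Your proposal is correct and takes essentially the same route as the paper, which simply says ``This is an immediate consequence of Lemma \ref{symmetry}.'' You unpack that: you re-derive the invariance $\phi_{\e,T_i\p}=\phi_{\e,\p}\circ T_i$ and $M_\e(T_i\p)=M_\e(\p)$ for each reflection $T_i$ (exactly the argument in the proof of Lemma \ref{symmetry}), and then kill the out-of-plane derivatives one coordinate at a time. The only cosmetic difference from the most literal reading of the paper is that Lemma \ref{symmetry} is stated for a single transformation $T$; to invoke it verbatim one would take $T=T_3\circ\cdots\circ T_N$, whose fixed point set is precisely $\Omega_0$, whereas you instead iterate over the generators $T_3,\dots,T_N$ separately. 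These are interchangeable, and your version has the mild advantage of showing directly that each $\partial M_\e/\partial P_j^l$ with $l\ge 3$ vanishes, rather than going through a projection/averaging step.
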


\begin{proof} This is an immediate consequence of Lemma \ref{symmetry}.
\end{proof}

>From Lemma \ref{validity}, we need to find a critical
point of the functional ${M}_\e|_{\Omega_0}$.

Now we set up a maximization problem for the function $\varphi_k$ defined by 

\begin{equation*}
\varphi_k(\p):=\min\limits_{i,j=1,\dots,k\atop i\not =j}\left\{ {\di}_{\partial\Omega}(P_i),\
{|P_i-P_j|\over2}\right\},\quad \p:=(P_1,\dots,P_k) . \end{equation*}
The function $\varphi_k$ appears naturally in the location of the asymptotic spikes, as we will see at the end of the proof of Theorem \ref{th1}.

First we need some auxiliary lemmas.

\begin{lemma} \label{lem1} Let $\Gamma$ be as in (a2). For any
$\delta_0>0$  there exist $\delta\in (0,\delta_0)$ and an even integer $k$
 such that
$$\sup\left\{\varphi_k(\p)\,\big|\, P_i\in \gamma_\delta,\; P_i<P_{i+1}\;\;(P_{k+1}:=P_1)\right\}
=\delta$$ where $$\gamma_\delta:=\{P\in\Omega_0\,|\, {\rm d}_{\Gamma}(P)=\delta\}.$$ Moreover, if $\p^*=(P^*_1,\ldots, P^*_{{k}})\in(\gamma_\delta)^{ k}$ is such that $\varphi_{ k}(\p^*)=\delta$, then the points  $P_1^*,\ldots, P_k^*$ satisfy \eqref{pack},
i.e. they form a polygonal having
vertices on $\gamma_{\delta}$ and edge $2\delta$.
\end{lemma}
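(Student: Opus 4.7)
The bound $\sup\varphi_k\le\delta$ on cyclic $\p\in\gamma_\delta^k$ is automatic: the symmetry (a1) and the $C^2$-regularity of $\partial\Omega$ imply that for $\delta$ small the closest point of $\partial\Omega$ to any $P\in\gamma_\delta$ lies on $\Gamma$, so $\di_{\partial\Omega}(P_i)=\delta$ enters the minimum in the definition of $\varphi_k$. The content of the lemma is therefore an \emph{attainment} statement: we must produce $\delta\in(0,\delta_0)$ and an even $k$ such that some cyclic configuration on $\gamma_\delta$ has all pairwise distances $\ge 2\delta$, and verify that any such configuration consists of the vertices of an equichord inscribed polygon of side exactly $2\delta$.

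The plan is to work on the planar curve $\gamma_\delta$: under (a2) and the $C^2$-regularity hypothesis, for $\delta$ small $\gamma_\delta$ is a smooth strictly convex simple closed curve in $\Omega_0$, with length $L(\delta)$ continuous in $\delta$ and $L(0)=L_0:=\mathrm{length}(\Gamma)>0$. Introduce
$$C_k(\delta):=\max\bigl\{\min_i|P_i-P_{i+1}|\;:\;(P_1,\ldots,P_k)\in\gamma_\delta^k\text{ cyclically ordered}\bigr\},$$
which is attained by compactness, and apply the intermediate value theorem to $C_k(\delta)-2\delta$. For $\delta$ near $0$, $C_k(\delta)$ is close to the chord of the equichord $k$-gon inscribed in $\Gamma$, which is of order $L_0/k$, so $C_k(\delta)>2\delta$; on the other hand, as soon as $2k\delta>L(\delta)$ the trivial bound $C_k(\delta)\le L(\delta)/k<2\delta$ kicks in. Continuity then yields some $\delta_k$ with $C_k(\delta_k)=2\delta_k$, and taking $k$ even and sufficiently large forces $\delta_k\in(0,\delta_0)$.

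The main obstacle, and the technical core of the proof, is an \emph{equalization lemma}: at every maximizer $\p^\ast$ realizing $C_k(\delta)$, all consecutive chords $c_i^\ast:=|P_i^\ast-P_{i+1}^\ast|$ coincide. I would prove it by a balancing argument. Suppose some $c_{i_0}^\ast$ is minimal and a neighboring chord $c_{i_0+1}^\ast$ strictly exceeds it; sliding $P_{i_0+1}^\ast$ along $\gamma_\delta$ toward $P_{i_0+2}^\ast$ increases $c_{i_0}^\ast$ and decreases $c_{i_0+1}^\ast$, so for a small enough shift the minimum strictly grows, contradicting maximality. Strict convexity of $\gamma_\delta$ guarantees that the derivatives of the chord functions at $\p^\ast$ have the claimed signs; when several chords simultaneously attain the minimum value, the shifts must be performed in parallel at the corresponding vertices, which is only bookkeeping.

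Assuming the equalization lemma, the maximizer $\p^\ast$ has $|P_i^\ast-P_{i+1}^\ast|=2\delta_k$ for every $i$. For $k$ large, the interior angles $\alpha_i$ of the inscribed polygon tend to $\pi$ (since the polygon approximates the smooth curve $\gamma_{\delta_k}$), so
$$|P_{i-1}^\ast-P_{i+1}^\ast|=4\delta_k\sin(\alpha_i/2)>2\delta_k,$$
and the analogous local-flatness picture yields $|P_i^\ast-P_{i+j}^\ast|>2\delta_k$ for all $j\ge 2$. Hence $\varphi_k(\p^\ast)=\delta_k$, which gives the supremum statement. For the converse characterization, any cyclic $\p$ on $\gamma_{\delta_k}$ with $\varphi_k(\p)=\delta_k$ satisfies $\min_i|P_i-P_{i+1}|\ge 2\delta_k=C_k(\delta_k)$, so $\p$ realizes the maximum; the equalization lemma then forces $|P_i-P_{i+1}|=2\delta_k$ for every $i$, establishing \eqref{pack}.
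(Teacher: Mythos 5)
Your proposal is correct in its conclusions but takes a genuinely different route from the paper, and it is worth comparing.

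The paper defines $\Sigma$ to be the set of $\delta\in(0,\delta_0]$ for which a cyclic configuration on $\gamma_\delta$ with consecutive gaps $\geq 2\delta$ exists, and puts $\delta^*=\sup\Sigma$. The choice $k>\ell(\Gamma)/(2\delta_0)$ forces $\delta^*<\delta_0$, and compactness gives $\delta^*\in\Sigma$. The equalization of the chords is then obtained by a single "push-out" contradiction: if some consecutive gap at a configuration $\p^*\in\gamma_{\delta^*}^k$ with $\varphi_k(\p^*)=\delta^*$ strictly exceeded $2\delta^*$, one can slide the points backwards (choosing the arclength shifts to be strictly decreasing, so that every cyclic consecutive gap becomes strictly $>2\delta^*$), project to $\gamma_\delta$ for $\delta$ slightly greater than $\delta^*$, and contradict maximality of $\delta^*$. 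Thus the extremality of $\delta^*$ does double duty: it both pins down $\delta$ and enforces equality of the chords, with no separate variational argument needed on a fixed curve.

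Your proof instead fixes a curve $\gamma_\delta$, defines $C_k(\delta)$ as the maximal minimal consecutive chord, and applies the intermediate value theorem to $C_k(\delta)-2\delta$ to produce $\delta_k$; you then invoke a standalone \emph{equalization lemma} asserting that any maximizer of $C_k(\delta)$ on a fixed strictly convex curve has all consecutive chords equal. This is a valid alternative, and the endgame (non-consecutive distances exceed $2\delta_k$ by approximate flatness; any $\p$ with $\varphi_k(\p)=\delta_k$ is a maximizer, hence equichord) is sound. Two points deserve emphasis, though. First, the IVT route needs continuity of $\delta\mapsto C_k(\delta)$, which you assert but do not prove (the paper only needs closedness of $\Sigma$, a weaker compactness statement). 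Second, and more substantively, the equalization lemma is the real work in your approach and your treatment of it is only a sketch: when the minimal chord value is attained on several disjoint cyclic runs of indices, the "parallel shifts" must be organized with arclength increments increasing along each run so that \emph{every} chord in a run gains, the boundary chord of each run loses but stays above the (slightly raised) minimum, and the runs don't interfere — and one must also know that, for arcs of the relevant size, the chord is monotone in the arclength gap on the strictly convex curve $\gamma_\delta$. All of this is true in the regime of small $\delta$ and large $k$, but it is precisely the kind of bookkeeping that the paper's push-out argument circumvents by letting $\delta$ vary instead of perturbing on a fixed curve. In short: your route is legitimate and arguably more "classical," but the burden of rigor lies in the equalization lemma, which you should prove in full if you go this way; the paper's formulation is designed to avoid that burden.
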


\begin{proof}
The strict convexity of $\Gamma$ implies that, if $\delta_0>0$  is sufficiently small, then  for any $\delta\in(0,\delta_0]$ we have that  $\gamma_\delta$
is a  regular closed curve   and \begin{equation}\label{mimim}\hbox{every point of }\gamma_\delta
  \hbox{ has exactly two points on }\gamma_\delta \hbox{ at distance } 2\delta.\end{equation}
  Then choose $k\in\N$ such that $k$ is even and satisfy $$ k>\frac{\ell(\Gamma)}{2\delta_0},$$
where $\ell(\Gamma)$  denotes the length of the curve $\Gamma$.  We define
 $$\Sigma=\left\{\delta\in (0,\delta_0]\,\Big| \,\exists P_1,\ldots ,P_k\in \gamma_\delta\hbox{ s.t. }P_i<P_{i+1},\;\;|P_i-P_{i+1}|\geq 2\delta\right\}.$$
 The definition of $k$ implies that $\delta_0\not\in \Sigma$.
On the other hand it is easy to prove that  $\Sigma$ contains $\delta$ if $\delta<<\delta_0.$ Let us define $\delta^\ast$ as
$$\delta^*=\sup\{\delta\,|\, \delta\in\Sigma\}.$$
 A straightforward computation shows that $\delta^*$ is actually a maximum (it is sufficient to consider a maximizing sequence and then pass to the limit for a convergent  subsequence), hence $\delta^*\in (0,\delta_0).$
We point out that $\di_{\partial\Omega}(P)=\di_\Gamma(P)=\delta$ for any $P\in\gamma_\delta$ provided that $\delta$ is small enough, so we clearly have
$$\sup_{\p\in (\gamma_{\delta^*}\!)^k}\varphi_k(\p)= \delta^*.$$
Let $\p^*=(P_1^*,\ldots,P_{ k}^*)\in (\gamma_{\delta^*}\!)^{k}$ be such that
 $\varphi_{ k}(\p^*)= \delta^*$, which implies  \begin{equation*}\min_{i\neq j}|{ P}_i^*-{P}_j^*|\geq 2\delta^*.\end{equation*}
We claim that \begin{equation}\label{cicchi}|{P}_{i+1}^*-{P_i}^*|=2\delta^*\quad \forall i=1,\ldots, k.\end{equation} Indeed, assume by contradiction that $|{P}_2^*-{P}_1^*|>2\delta^*$. Then, using \eqref{mimim}, we can  move the points $P_i^*$'s slightly backwards into new points $P_i$'s:
 $$P_1=P_1^*,\;\;\; P_i<P_i^*<P_{i+1} \;\;\;\forall i=2,\ldots, k,
 $$ and the $P_i$'s  verify
 $$|P_{i+1}-P_i|>  2\delta^*\quad \forall i=1,\ldots,k.$$
Consider $\bar P_i$ the projection of $P_i$ onto $\gamma_{\delta}$: by continuity, if $\delta>\delta^* $ is sufficiently closed to $\delta^*$, the $\bar P_i$'s  satisfy
 $$|\bar P_{i+1}-\bar P_i|> 2\delta\quad \forall i=1,\ldots,k$$ which contradicts the maximality of $\delta^*$.

Then \eqref{cicchi} holds, which implies that the $P^\ast_i$'s satisfy \eqref{pack}.

\end{proof}

\begin{lemma}\label{lem20} Let $D\subset\R^N$ be a strictly convex domain.  
Then, for any $\delta>0$ there exists $\eta>0$ such that, if $P,Q\in\partial D$, $|P-Q|\geq \delta$ and
$\eta_1,\,\eta_2\in [0,\eta]$, $(\eta_1,\eta_2)\not=(0,0)$, then
$$|P-\nu_P\eta_1-Q+\nu_Q\eta_2|<|P-Q|,$$
where $\nu_P$ is the unit outward normal to $\partial D$ at $P$.
\end{lemma}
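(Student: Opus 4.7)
The plan is to expand the squared distance around $(\eta_1,\eta_2)=(0,0)$ and show that the first-order term is strictly negative with magnitude controlled uniformly from below, while the quadratic remainder is negligible for small $\eta$. First I would write
\[
|P-\nu_P\eta_1-Q+\nu_Q\eta_2|^2=|P-Q|^2-2\eta_1\,\nu_P\!\cdot\!(P-Q)-2\eta_2\,\nu_Q\!\cdot\!(Q-P)+|\nu_P\eta_1-\nu_Q\eta_2|^2,
\]
and observe that since $\nu_P$ and $\nu_Q$ are unit vectors, the last term is bounded above by $(\eta_1+\eta_2)^2$. Thus the task reduces to showing that the sum of the two linear terms dominates this quadratic correction.

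Next I would exploit the strict convexity of $D$ to bound the coefficients of $\eta_1$ and $\eta_2$ from below uniformly. Strict convexity guarantees that at each boundary point $P$ the tangent hyperplane supports $\overline D$ and meets it only at $P$, so $\nu_P\cdot(P-Q)>0$ whenever $P\neq Q$ on $\partial D$. Since $\partial D$ is compact and the outward normal map $P\mapsto\nu_P$ is continuous (this is the standing regularity hypothesis on $\partial D$ implicit in the use of $\nu_P$), the function $(P,Q)\mapsto \nu_P\cdot(P-Q)$ attains a strictly positive minimum on the compact set $\{(P,Q)\in\partial D\times\partial D:|P-Q|\geq\delta\}$. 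Calling this minimum $c=c(\delta)>0$, the same constant bounds $\nu_Q\cdot(Q-P)$ from below by symmetry.

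Plugging these estimates into the expansion yields
\[
|P-\nu_P\eta_1-Q+\nu_Q\eta_2|^2-|P-Q|^2\leq -(\eta_1+\eta_2)\bigl(2c-(\eta_1+\eta_2)\bigr),
\]
which is strictly negative as soon as $(\eta_1,\eta_2)\neq(0,0)$ and $\eta_1+\eta_2<2c$. Choosing $\eta:=c/2$ (or any value smaller than $c$) gives the claim. The only step requiring genuine care is the extraction of the uniform constant $c(\delta)$: this rests on combining pointwise strict convexity with continuity of $P\mapsto\nu_P$ and compactness of $\partial D$, all standard once one accepts that strict convexity of $D$ comes paired with at least $C^1$ boundary regularity. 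Everything else is a bare Taylor-type estimate.
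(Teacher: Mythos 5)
Your proof is correct and follows essentially the same route as the paper's: expand the squared distance, use strict convexity together with compactness of $\{(P,Q)\in\partial D\times\partial D : |P-Q|\geq\delta\}$ to obtain a uniform lower bound $c(\delta)>0$ on $\nu_P\cdot(P-Q)$, and balance the linear gain against the quadratic correction $|\nu_P\eta_1-\nu_Q\eta_2|^2\leq(\eta_1+\eta_2)^2$. Your choice $\eta:=c/2$ is in fact slightly more careful than the paper's (which sets $\eta$ equal to the infimum itself, leaving the borderline case $\eta_1=\eta_2=\eta$ formally unresolved), but the underlying argument is the same.
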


\begin{proof} Fixed $\delta>0$, by the strict convexity  we get
$$\inf_{P\in\partial D,\,|Q-P|\geq \delta}\nu_P\cdot (P-Q)=\eta>0.$$
 For $|P-Q|\geq \delta$, $\eta_1,\,\eta_2\in
[0,\eta]$ with $(\eta_1,\eta_2)\not=(0,0)$, we compute
$$\begin{aligned}|P-\nu_P\eta_1-Q+\nu_Q\eta_2|^2-|P-Q|^2&=2\eta_1(Q-P) \cdot\nu_P+2\eta_2(P-Q) \cdot\nu_Q\\ &\;\;\;+\eta_1^2+\eta_2^2-2\eta_1\eta_2\nu_P\nu_Q\\ &\leq -2(\eta_1+\eta_2)\eta +(\eta_1+\eta_2)^2<0.\end{aligned}$$
\end{proof}

With the help of the previous two lemmas we can now give the following result which will be crucial for the asymptotic locations of the $k$ spikes in the solutions of Theorem \ref{th1}.

\begin{proposition}\label{lemderlem2} Assume that $\Omega$ satisfies (a1)-(a2).  For any $\delta_0>0$ there  exist $\delta\in (0,\delta_0)$ and an even integer $k$  such that, if $\eta$ is sufficiently small, then
\begin{equation}\label{supmax}\sup_{\p\in \partial U_{\eta}}\varphi_{ k}(\bbm[\p])<\sup_{\p\in U_{\eta}}\varphi_{ k}(\bbm[\p])=\delta
\end{equation}
where
 $$U_{\eta}=\{\p\in\Omega_0^k\,|\, \delta-\eta<{\rm d}_{\Gamma}(P_i)<\delta+\eta,\; \bar P_i<\bar P_{i+1}\;\,\forall i,\;\;|P_i-P_j|>2\delta-\eta\;\hbox{ for }i\neq j\}.$$
 Here $\bar P$ denotes the projection of $P$ onto  the curve  $\gamma_{\delta}:=\{P\in\Omega_0\,|\, {\rm d}_{\Gamma}(P)=\delta\}$.

Moreover if $\p^*=({P}^*_1,\ldots, {P}^*_{{k}})\in U_\eta$
is such that  $\varphi_{ k}(\p^*)=\delta$, then the points $P^*_1,\ldots, P_k^*$ satisfy \eqref{pack}.

\end{proposition}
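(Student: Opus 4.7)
The plan is to chain the two preceding lemmas. Lemma \ref{lem1} supplies the desired pair $(\delta,k)$ and characterizes the maximizers of $\varphi_k$ restricted to $\gamma_\delta^k$; Lemma \ref{lem20}, applied to the (strictly convex, after shrinking $\delta_0$) domain $D$ bounded by $\gamma_\delta$, quantifies the strict decrease of pairwise distances when points are pushed off $\gamma_\delta$ toward the interior of $D$. I would fix $(\delta,k)$ from Lemma \ref{lem1} and then take $\eta$ so small that (a) Lemma \ref{lem20} applies to $D$ with excursions in $[0,\eta]$ and distance cutoff $\delta$, and (b) in the strip $\{P\in\Omega_0:\delta-\eta<\di_\Gamma(P)<\delta+\eta\}$ one has $\dd(P)=\di_\Gamma(P)$ (true for small $\eta$ because $\Gamma$ is the nearest component of $\partial\Omega$ to itself).

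The heart of the argument is to compute $\sup_{U_\eta}\varphi_k$ and identify all maximizers simultaneously. Any $\p^*$ obeying \eqref{pack} sits in $U_\eta$ with $\varphi_k(\p^*)=\delta$, so $\sup_{U_\eta}\varphi_k\geq\delta$. Conversely, let $\p\in U_\eta$ satisfy $\varphi_k(\p)\geq\delta$. Then $\dd(P_i)\geq\delta$, and by (b) $\di_\Gamma(P_i)\geq\delta$, so each $P_i$ lies in $\overline D$ and can be written $P_i=\bar P_i-\eta_i\nu_{\bar P_i}$ with $\bar P_i\in\gamma_\delta$ its projection and $\eta_i\in[0,\eta)$. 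Lemma \ref{lem20} yields $|\bar P_i-\bar P_j|\geq |P_i-P_j|\geq 2\delta$ for all $i\neq j$, strictly whenever $\eta_i+\eta_j>0$. Hence $\bar P_1,\ldots,\bar P_k\in\gamma_\delta$ satisfy the hypotheses of Lemma \ref{lem1} with consecutive distances $\geq 2\delta$, which forces $|\bar P_i-\bar P_{i+1}|=2\delta$ for every $i$. Plugging this into $2\delta\leq|P_i-P_{i+1}|\leq|\bar P_i-\bar P_{i+1}|=2\delta$ and invoking the strict part of Lemma \ref{lem20} forces $\eta_i=0$ for all $i$; thus $P_i=\bar P_i$ and \eqref{pack} holds, which simultaneously proves the last clause of the proposition.

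Finally, for the strict inequality on $\partial U_\eta$: any configuration satisfying \eqref{pack} fulfils every defining inequality of $U_\eta$ strictly, so it sits in the topological interior. Since $\overline{U_\eta}$ is compact in $\overline{\Omega_0}^k$ and $\varphi_k$ is continuous, the supremum on the closed set $\partial U_\eta$ is attained; were it equal to $\delta$, the attainer would itself satisfy \eqref{pack} and therefore lie in the interior, a contradiction. This yields \eqref{supmax}. The main obstacle is orchestrating Lemma \ref{lem20} $\to$ Lemma \ref{lem1} $\to$ Lemma \ref{lem20} in that order while keeping the sign convention (inward versus outward normal to $D$) consistent and verifying $\dd=\di_\Gamma$ on the thin strip; once $\eta$ and $\delta$ are taken sufficiently small, these bookkeeping hurdles are dispatched uniformly.
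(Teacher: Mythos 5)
Your proof is correct and it reaches the same destination with the same two tools (Lemma \ref{lem1} and Lemma \ref{lem20}), but the route is genuinely different from the paper's and worth noting. The paper proves $\varphi_k<\delta$ on $\partial U_\eta$ directly, by a case analysis that exploits a constraint being tight on the boundary: after discarding the easy cases it reduces WLOG to $\di_\Gamma(P_1)=\delta+\eta$, so $\eta_1=\eta>0$, which activates the strict inequality of Lemma \ref{lem20} at the pair $(P_1,P_2)$ after Lemma \ref{lem1} forces $|\bar P_1-\bar P_2|=2\delta$. You instead characterize the maximizing configurations in $U_\eta$ first (showing $\varphi_k(\p)\ge\delta$ forces \eqref{pack} and hence $\eta_i\equiv0$), and then obtain $\sup_{\partial U_\eta}\varphi_k<\delta$ as a corollary by a compactness-and-interiority argument. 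Your version also sidesteps the paper's auxiliary split at the threshold $\delta/2$ (the step where $|\bar P_i-\bar P_j|\le\delta/2$ must be handled by the explicit quadratic estimate): by choosing the cutoff in Lemma \ref{lem20} to be $\delta$ and noting $|\bar P_i-\bar P_j|\ge|P_i-P_j|-\eta_i-\eta_j\ge2\delta-2\eta\ge\delta$ for $\eta\le\delta/2$, you guarantee the lemma's hypothesis is always satisfied, which tidies the bookkeeping. As a bonus, your route makes the ``moreover'' clause (identification of the maximizers in $U_\eta$) explicit, whereas in the paper it is left somewhat implicit in ``Combining \eqref{thesis} with Lemma \ref{lem1}''.

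One small point to make airtight: your step 3 is stated for $\p\in U_\eta$ (whence $\eta_i\in[0,\eta)$), but your final compactness argument applies it to an attainer $\p\in\partial U_\eta\subset\overline{U_\eta}$. You should state the characterization on $\overline{U_\eta}$ — the argument is identical with $\eta_i\in[0,\eta]$, and the distinctness $|\bar P_i-\bar P_j|\ge2\delta$ guarantees that the cyclic ordering of the projections persists in the closure, so Lemma \ref{lem1} still applies. With that adjustment the argument is complete.
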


\begin{proof}

Let $\delta\in (0,\delta_0)$ and $k\in\N$ even be such that Lemma \ref{lem1} holds.  Let $D$ be the strictly convex bounded flat domain whose boundary is
   $ \gamma_{\delta}  ,$ which is  contained in $\Omega_0$.    According to Lemma \ref{lem20}, if  $\eta\in
(0, \frac{\delta}{2})$ is sufficiently small, then, for any  $Q,\,Q'\in\gamma_{\delta}$ with  $|Q-Q'|\geq
\frac{\delta}{2}$ and any $\eta_1,\,\eta_2\in [0,\eta]$, $(\eta_1,\eta_2)\not=(0,0)$,  we get
\begin{equation}\label{derlem2}|Q-\eta_1\nu_Q-Q'+\eta_2\nu_{Q'}|<|Q-Q'|.\end{equation}
We are going to prove that, for such $\eta$, \begin{equation}\label{thesis}\p\in\partial U_\eta\quad\Longrightarrow\quad\varphi_{ k}(\p)<\delta. \end{equation}

It is useful to point out that  for any $P\in\partial U_\eta$ we have
$ {\rm d}_{\Gamma}(P)={\rm d}_{\partial\Omega}(P)$, provided  $\delta$ is small enough. Then
  it is immediate that, if ${\rm d}_{\Gamma}(P_i)<\delta$ for some $i$ or $|P_i-P_j|<2\delta$
  for some $i\neq j$, then $\varphi_{k}(\p)<\delta$.  Moreover, if $\bar P_i=\bar P_{i+1}$ for some $i$, then, by construction $|P_i-P_{i+1}|\leq 2\eta<\delta$, and again we get
   $\varphi_{k}(\p)<\delta$.
  Therefore,
 without loss of generality we may assume
$${\rm d}_{\Gamma}(P_i)\geq \delta,\quad \bar P_i<\bar P_{i+1}\,\,\forall i,\quad {\rm d}_{\Gamma}(P_1)=\delta+\eta, \quad |P_i-P_j|\geq 2\delta \,\,\,\forall\ i\neq j.$$
Consider $\bar P_i$ the projections of $P_i$ onto $\gamma_{\delta}$, i.e.
 $$P_i=\bar P_i-\eta_i \nu_{\bar P_i},\;\;\bar P_i\in\gamma_{\delta},\,\,\eta_i\in [0,\eta].$$  If there exist $i\neq j$ such that  $|\bar P_i-\bar P_j|\leq \frac{\delta}{2}$, then,

$$\begin{aligned}|P_i-P_j|^2&=|\bar P_i-\eta_i\nu_{\bar P_i}-\bar P_j+\eta_j\nu_{\bar P_j}|^2\\ &=|\bar P_i-\bar P_j|^2+|\eta_i\nu_{\bar P_i}-\eta_j\nu_{\bar P_j}|^2-2\langle \bar P_i-\bar P_j,\eta_i\nu_{\bar P_i}
-\eta_j\nu_{\bar P_j}\rangle\\ &\leq |\bar P_i-\bar P_j|^2+4\eta|\bar P_i-\bar P_j| +4\eta^2 \leq \frac{\delta^2}{4}+2\delta\eta+4\eta^2< 4\delta^2,\end{aligned}$$ and so $\varphi_{k}(\p)<\delta$, by which  \eqref{thesis} follows.
Now assume  $|\bar P_i-\bar P_j|\geq \frac{\delta}{2}$ if $i\neq j$. Then $|\bar P_i-\bar P_j|\geq |P_i-P_j|$ by
\eqref{derlem2}. If  $|\bar P_i-\bar P_j|<2\delta$ for some $i\neq j$, then again $\varphi_{ k}(\p)<\delta$ and we have done. Now assume $|\bar P_i-\bar P_j|\geq2\delta$ for every $i\neq
j$, which means $\varphi_{ k}(\bar{\p})=\delta$. By Lemma \ref{lem1}   $|\bar P_{2}-\bar P_1|=2\delta$. Then
\eqref{derlem2} implies $|P_{2}-P_1|<|\bar P_{2}-\bar P_1|=2\delta$, by which $\varphi_{ k}(\p)<\delta$, and \eqref{thesis} follows. Combining \eqref{thesis} with Lemma \ref{lem1} we obtain the thesis.

 \end{proof}

\noindent{\bf Proof of Theorems \ref{th1} completed.}
Let us fix $\delta_0>0$  is sufficiently small such that for any $\delta\in(0,\delta_0]$  $\gamma_\delta$
is a  regular closed curve and satisfies \eqref{mimim}. Then, let  us take $\delta \in (0,\delta_0)$, $k\in \N$ even such that Proposition \ref{lemderlem2} holds for $\eta>0$ sufficiently small.
By \eqref{mimim} we deduce
$$\min\left\{\frac12\min_{j-i\geq 2\atop ( i,j)\neq (1,k) }|P_i-P_j|\,\Bigg|\, P_i\in \gamma_\delta,\; P_i<P_{i+1}\;\;\forall i,\; |P_i-P_{j}|\geq2 \delta\;\;\forall i\neq j\right\}>\mu>\delta$$ for some suitable $\mu>0$.
Hence, possibly reducing the number $\eta$, we may assume
$$\min\left\{\frac12\min_{j-i\geq 2\atop ( i,j)\neq (1,k)}|P_i-P_j|\,\Bigg|\, \p=(P_1,\ldots, P_k)\in U_\eta\right\}>\mu>\delta.$$
Now, if $\p=(P_1,\ldots, P_k)\in U_\eta$ and $i<j$ is such that $(-1)^{i+j}=1$, then $j-i\geq 2$ and, recalling also that $k$ is even,  $( i,j)\neq (1,k)$, consequently,
$|P_i-P_j|\geq \mu$.
By using Remark \ref{energy3} we get
\begin{equation*}\begin{aligned}M_\e [ \p]=&\, \sum_{i=1}^{k}e^{-\frac{2\dd(P_i)+o(1)}{\e}}+\sum_{i,j=1\atop i<j}^ke^{-\frac{|P_i-P_{j}|+o(1)}{\e}}+O(e^{-\frac{\mu}{\e}})
\\ &
=
e^{-\frac{2\varphi_k(\hbox{\scriptsize$\p$})+o(1)}{\e}}+O(e^{-\frac{\mu}{\e}})\end{aligned}
\end{equation*}
uniformly for ${\bf P }=(P_1,\ldots,P_k)\in
U_\eta$.
Proposition \ref{lemderlem2} applies and gives  $$M_\e [ \p^*]=e^{-\frac{2\delta+o(1)}{\e}}$$ where $\p^*\in U_\eta$ is such that $\varphi_k(\p^*)=\delta$,
and $$\inf_{\hbox{\scriptsize$\p$}\in\partial U_\eta}M_\e [ \p]\geq e^{-\frac{2\delta'}{\e}}$$
for some $\delta'<\delta$.
We conclude that $ M_\e$ has a minimum point $\p_\e=(P_1^\e,\ldots, P_k^\e) $ in $ U_\eta$.
According to Lemma \ref{relation} and Lemma \ref{validity}, for
$\e>0$ sufficiently small $ v_{\e}:=w_{\e,{\bf P}_\e}+ \phi_{\e,{{\bf P}}_\e}$ solves the problem \eqref{sch}.
 Finally if $(P_1^*,\ldots, P_k^*)$ is the limit of  a subsequence of $(P_1^\e,\ldots, P_k^\e)$, Proposition \ref{lemderlem2} implies  that $(P_1^*,\ldots, P_k^*)$ satisfies \eqref{pack}.

 Thus the thesis of Theorem \ref{th1} holds, \eqref{deluxe} following from \eqref{known0} and Lemma \ref{reg}.

\end{document}